\def \im {\cong}
\begin{document}

\begingroup
\newtheorem{theorem}{Theorem}[section]  
\newtheorem{cor}[theorem]{Corollary}    
\newtheorem{lem}[theorem]{Lemma}        
\newtheorem{prop}[theorem]{Proposition} 
\newtheorem{conj}[theorem]{Conjecture}  
\endgroup

\newtheorem{construction}{Construction}[section] 

\newtheorem{example}{Example}        
\newtheorem{fig}{Figure}          

\newtheorem{definition}[theorem]{Definition}  
\newtheorem{rem}[theorem]{Remark}  
\newtheorem{notation}{Notation.}
\renewcommand{\thenotation}{}
\newtheorem{terminology}{Terminology.}
\renewcommand{\theterminology}{}


\title{The uniqueness of a distance-regular graph 
with intersection array $\{32,27,8,1;1,4,27,32\}$ and
related results}

\author{Leonard H. Soicher\\
School of Mathematical Sciences\\
Queen Mary University of London\\
Mile End Road, London  E1 4NS, UK\\
{\small L.H.Soicher@qmul.ac.uk}
}
\maketitle

\begin{abstract}
It is known that, up to isomorphism, there is a
unique distance-regular graph $\Delta$ 
with intersection array $\{32,27;1,12\}$
(equivalently, $\Delta$ is the unique strongly regular
graph with parameters $(105,32,4,12)$).
Here we investigate the distance-regular antipodal covers of $\Delta$.
We show that, up to
isomorphism, there is just one distance-regular antipodal triple cover
of $\Delta$ (a graph $\hat\Delta$ discovered by the author over twenty
years ago), proving that there is a unique distance-regular graph with
intersection array $\{32,27,8,1;1,4,27,32\}$. In the process, we confirm
an unpublished result of Steve Linton that there is no distance-regular
antipodal double cover of $\Delta$, and so no distance-regular graph
with intersection array $\{32,27,6,1;1,6,27,32\}$. We also show there
is no distance-regular antipodal $4$-cover of $\Delta$, and so no
distance-regular graph with intersection array $\{32,27,9,1;1,3,27,32\}$,
and that there is no distance-regular antipodal $6$-cover of $\Delta$
that is a double cover of $\hat\Delta$.

\bigskip
[Keywords:
distance-regular graph;
strongly regular graph;
antipodal cover;
fundamental group]

\end{abstract}

\vfill\eject

\section{Introduction}

A graph $\Gamma$ is \textit{distance-regular} with \textit{intersection array}
\[ \{b_0,b_1,\ldots,b_{d-1};c_1,c_2,\ldots,c_d\} \] 
if $\Gamma$ is connected with diameter $d$, and for $i=0,1,\ldots,d$,
whenever vertices $v,w$ are at distance $i$ in $\Gamma$, there are
exactly $b_i$ vertices adjacent to $w$ at distance $i+1$ from $v$
and exactly $c_i$ vertices adjacent to $w$ at distance $i-1$ from $v$
(with the convention that $b_d=c_0=0$). Distance-regular graphs occur
in many areas of discrete mathematics, including coding theory, design
theory and finite geometry. 
See, for example, the encyclopaedic reference \cite{BCN89} by Brouwer,
Cohen and Neumaier. 
The present paper is a contribution to the problem
of determining the distance-regular
graphs with a given intersection array. 

We shall investigate the antipodal 
distance-regular covers of the Goethals-Seidel graph \cite{GS70}, 
the unique (up to isomorphism) distance-regular graph $\Delta$
with intersection array $\{32,27;1,12\}$
\cite{Co06,DC08}. The graph $\Delta$ can be constructed
as the second subconstituent of the second subconstituent
of the famous McLaughlin graph, the unique distance-regular
graph with intersection array $\{112,81;1,56\}$ 
(see \cite{GS70} and \cite{DC08}).   
 
A distance-regular antipodal triple cover $\hat\Delta$ of $\Delta$ was
constructed by the author in \cite{So93}, 
but its uniqueness was not determined. 
The main purpose of this paper is to prove that, up to isomorphism, 
$\hat\Delta$ is the unique distance-regular
graph with intersection array $$\{32,27,8,1;1,4,27,32\}.$$ In the
process of doing this, we confirm an unpublished result of Steve Linton
which shows that there is no distance-regular graph with intersection array 
$\{32,27,6,1;1,6,27,32\}$.  We also prove there is no
distance-regular graph with intersection array $\{32,27,9,1;1,3,27,32\}$,
and that there is no distance-regular antipodal $6$-cover of $\Delta$
that is a double cover of the distance-regular triple cover $\hat\Delta$. 

We classify distance-regular antipodal $r$-covers
of $\Delta$ by studying the $r$-fold topological covers of the
$2$-dimensional simplicial complex whose $0$-, $1$-, and $2$-simplices
are respectively the vertices, edges, and triangles of $\Delta$. Our
main tool is version~2.0 of the author's \textsf{GAP} program 
described in \cite{RS00} for the computation of fundamental groups,
certain quotients of fundamental groups, and covers of finite abstract
$2$-dimensional simplicial complexes. This program is freely available from
\cite{fundamental}, where we also provide a \textsf{GAP/GRAPE} \cite{GAP,GRAPE}
logfile of all the computations described in this paper. It is hoped that 
\cite{fundamental} and the
methods of this paper will be useful in further classifications 
of covers of graphs.

All graphs in this paper are finite and undirected, with no loops
and no multiple edges. Throughout, we follow
\cite{BCN89} for graph-theoretical concepts and notation.
An important new reference for 
distance-regular graphs, covering developments since 
\cite{BCN89} was published, is van~Dam, Koolen and Tanaka \cite{vDKT14}.
See also Brouwer \cite{Br}. 
A good reference for the group-theoretical 
concepts used in this paper is Robinson \cite{Ro95}.
We denote the commutator subgroup of a group $G$ 
by $[G,G]$, the cyclic group of order $n$ is denoted by $C_n$,
and where $p$ is a prime, the elementary
abelian group of order $p^k$ is denoted by $C_p^k$. 

\section{The fundamental group and covers}

Let $\Gamma$ be a non-empty connected graph. 
We also consider $\Gamma$ to be an abstract 
$2$-dimensional simplicial complex (or $2$-complex) 
whose $0$-, $1$-, and $2$-simplices are respectively the vertices, edges,
and triangles of $\Gamma$. With this in mind, thoughout this paper, 
by a \textit{cover} of $\Gamma$, we mean  
a pair $(\tilde\Gamma,\theta)$, where $\tilde\Gamma$ is a graph and 
$\theta:V(\tilde\Gamma)\to V(\Gamma)$ is a surjection, called
a \textit{covering map}, such that the following hold:
\begin{itemize}
\item
for every $v\in V(\Gamma)$, the \textit{fibre} $v\theta^{-1}$ of $v$ is
a coclique (independent set) of $\tilde\Gamma$, 
\item
the union of any two distinct fibres mapping under $\theta$ to a non-edge of $\Gamma$ 
is a coclique of $\tilde\Gamma$,
\item
the induced subgraph on any two
fibres mapping under $\theta$ to an edge of $\Gamma$ 
is a perfect matching in $\tilde\Gamma$,
\item
the induced subgraph on any three fibres mapping under $\theta$ to a triangle of
$\Gamma$ consists of pairwise disjoint triangles in $\tilde\Gamma$.
\end{itemize}
Note that, as defined here, $\tilde\Gamma$ is a cover of $\Gamma$ precisely
when $\tilde\Gamma$ is a topological cover of $\Gamma$ when they are both
viewed as $2$-complexes as described above. 

When we do not need to specify the covering map explicity, we may
denote a cover $(\tilde\Gamma,\theta)$ simply by $\tilde\Gamma$.
If each fibre of a cover $\tilde\Gamma$
of $\Gamma$ has the same positive integer cardinality $r$, then we call $\tilde\Gamma$ an
$r$-\textit{cover} of $\Gamma$. A $2$-cover is also called a \textit{double cover}, 
and a $3$-cover is also called a \textit{triple cover}.  If $\Gamma$ is a non-complete
graph, then an \textit{antipodal} $r$-cover $\tilde\Gamma$ of $\Gamma$
is a connected $r$-cover of $\Gamma$ with the property that being equal
or at maximum distance in $\tilde\Gamma$ is an equivalence relation on
$V(\tilde\Gamma)$, whose equivalence classes are the fibres.

The connected $r$-covers of $\Gamma$ correspond to the transitive
permutation representations of degree $r$ of the fundamental group $G$
of $\Gamma$ (regarded as a $2$-complex), defined with respect to a fixed,
but arbitrary, spanning tree of $\Gamma$. We shall explain this further in
what follows. For a more general and detailed exposition, see \cite{RS00},
on which our explanation is based. We note that the choice of fixed spanning
tree does not affect the isomorphism class of the fundamental group of $\Gamma$. 

Now, fix a spanning tree $T$ of $\Gamma$. Then, 
for each arc (ordered edge) $(v,w)$ of $\Gamma$, define an abstract 
group generator $g_{v,w}$. Then $G$ is the 
finitely presented group whose generators are these $g_{v,w}$
and whose relations are: 
\begin{itemize} 
\item
$g_{s,t}=1$ for each arc $(s,t)$ of $T$;
\item 
$g_{v,w}g_{w,v}=1$ for each edge $\{v,w\}$ of $\Gamma$;
\item 
$g_{x,y}g_{y,z}g_{z,x}=1$ for each triangle $\{x,y,z\}$ of $\Gamma$.
\end{itemize}

Suppose $(\tilde\Gamma,\theta)$ is any connected 
$r$-cover of $\Gamma$. Then $T\theta^{-1}$ consists of $r$
disjoint copies $T_1,\ldots,T_r$ of $T$, with $T_i\theta=T$.  We may
thus label the vertices of $\tilde\Gamma$ by ordered pairs $(v,i)$,
with $v\in V(\Gamma)$ and $i\in I=\{1,\ldots,r\}$, so that the fibre
of a vertex $v$ of $\Gamma$ is $\{(v,1),\ldots,(v,r)\}$ and $(v,i) \in
V(T_i)$ for each $i$. In particular, note that a different choice for the
fixed spanning tree $T$ would only result in a relabelling of vertices within the
fibres of $\tilde\Gamma$.  

Now for each arc $(v,w)$ of $\Gamma$, let
$\rho_{v,w}$ be the permutation of $I$ defined by $i\rho_{v,w} = j$
if and only if $\{(v,i),(w,j)\}$ is an edge of $\tilde{\Gamma}$.
If $\{x,y,z\}$ is a triangle of $\Gamma$, then $\{x,y,z\}
\theta^{-1}$ is a disjoint union of triangles in $\tilde{\Gamma}$, and
so $\rho_{x,y}\rho_{y,z}\rho_{z,x}$ is the identity permutation. 
It now follows that the map $\rho$ defined on the generators of
$G$ by $(g_{v,w})\rho = \rho_{v,w}$ 
extends to a transitive permutation representation from $G$
to the symmetric group $S_r$. The $r$-cover $\tilde{\Gamma}$ is completely
defined by this representation.

Conversely, every transitive permutation representation $\rho:G\to S_r$
defines a connected $r$-cover, denoted $\Gamma_\rho$, of $\Gamma$. 
For each $v\in V(\Gamma)$, the fibre of $v$ in $\Gamma_\rho$ consists
of the ordered pairs $(v,i)$, for $i\in I$, and $(v,i)$ and $(w,j)$ are joined by an
edge in $\tilde\Gamma$ 
if and only if $\{v,w\}$ is an edge of $\Gamma$ and $j=i((g_{v,w})\rho)$.
The preimage in $G$ of the stabilizer in $G\rho$ of a point $i\in I$ is
isomorphic to the fundamental group of $\Gamma_\rho$ (see \cite{RS00}).

We consider two covers $(\Gamma_1,\theta_1)$ and $(\Gamma_2,\theta_2)$ of $\Gamma$ to be
\textit{isomorphic} if there is a graph isomorphism from $\Gamma_1$
to $\Gamma_2$ which maps the fibres of $\Gamma_1$ to those of
$\Gamma_2$. Thus, isomorphic
covers differ only by a relabelling of the fibres and of the vertices
within each fibre.  In particular, every connected $r$-cover of $\Gamma$
is isomorphic to a cover of the form $\Gamma_\rho$, for some transitive
permutation representation $\rho:G\to S_r$, corresponding to the
preimage in $G$ of the stabilizer in $G\rho$ of the point $1$.

Isomorphism of covers can be checked as follows using \textsf{nauty}
\cite{NAUTY}, which can be called from within \textsf{GRAPE}
\cite{GRAPE}.  Given the cover $\Gamma_1$ of $\Gamma$, we make a
$\{\textrm{red},\textrm{blue}\}$-vertex-coloured graph $\Gamma_1^+$.
The red-coloured vertices of $\Gamma_1^+$ are 
the vertices of $\Gamma_1$, with two red
vertices joined in $\Gamma_1^+$ precisely when those vertices are joined 
in $\Gamma_1$.  The blue-coloured vertices of $\Gamma_1^+$
are in one-to-one correspondence with the fibres of $\Gamma_1$, and a blue vertex is
joined in $\Gamma_1^+$ only to the red vertices in the corresponding fibre. Similarly,
we make $\Gamma_2^+$ from the cover $\Gamma_2$ of $\Gamma$. Then
$\Gamma_1$ and $\Gamma_2$ are isomorphic as covers of $\Gamma$
if and only if $\Gamma_1^+$
is isomorphic to $\Gamma_2^+$ by a colour-preserving graph isomorphism.

We are usually interested in classifying covers up to isomorphism.  First note
that if two graphs are isomorphic, then every cover of the first is
isomorphic to some cover of the second.  Moreover, if $(\Gamma_1,\theta_1)$
and $(\Gamma_2,\theta_2)$ are isomorphic covers of $\Gamma$, and
$(\tilde\Gamma_1,\tilde\theta_1)$ is any cover of $\Gamma_1$, then
$(\tilde\Gamma_1,\tilde\theta_1\theta_1)$ is a cover of $\Gamma$
isomorphic to $(\tilde\Gamma_2,\tilde\theta_2\theta_2)$ for some cover
$(\tilde\Gamma_2,\tilde\theta_2)$ of $\Gamma_2$.

\section{Imprimitivity and covers}

We may sometimes be interested in a cover of a cover. 

Let $\Gamma$ be a connected graph, let $(\Gamma_1,\theta_1)$ be a
connected $m$-cover of $\Gamma$, and let $(\Gamma_2,\theta_2)$
be a connected $n$-cover of $\Gamma_1$.  Then clearly,
$(\Gamma_2,\theta_2\theta_1)$ is a connected $mn$-cover of $\Gamma$. Given
a spanning tree $T$ of $\Gamma$, we take a spanning tree $T_1$
of $\Gamma_1$ containing the forest $T\theta_1^{-1}$, and define
the fundamental group $G$ of $\Gamma$ with respect to $T$ and
the fundamental group $G_1$ of $\Gamma_1$ with respect to $T_1$.
Then $(\Gamma_1,\theta_1)$ corresponds to a transitive representation
$\rho_1:G\to S_m$, with the fibre of $v$ in $\Gamma_1$ being labelled
$\{(v,i):1\le i\le m\}$, as described previously, and $(\Gamma_2,\theta_2)$
corresponds to a transitive representation $\rho_2:G_1\to S_n$, with
the fibre of $(v,i)$ being labelled $\{(v,i,j):1\le j\le n\}$. Now
$(\Gamma_2,\theta_2\theta_1)$ corresponds to a transitive representation
$\rho:G\to S_{mn}$, and if $m,n>1$, then $G\rho$ acts imprimitively on
the indices $(i,j)$ of the fibre of a vertex of $\Gamma$, the blocks of
imprimitivity being $\{(i,j):1\le j\le n\}$, for $i=1,\ldots,m$.

Conversely, suppose that $m,n>1$ and $\rho:G\to S_{mn}$ is a transitive
permutation representation of the fundamental group $G$ of $\Gamma$ such
that $G\rho$ is an imprimitive group having $m$ blocks of size $n$. Then,
if $\sigma:G\to S_m$ is the transitive permutation action of $G\rho$ on
the blocks of imprimitivity, we see that $\Gamma_\rho$ is an $n$-cover
of the $m$-cover $\Gamma_\sigma$ of $\Gamma$.

We have the following: 

\begin{theorem}
\label{TH:impdr}
Let $\Gamma$ be a non-complete distance-regular graph and suppose 
$\Gamma_\rho$ is
a distance-regular antipodal $mn$-cover of $\Gamma$ corresponding to a
transitive permutation representation $\rho$ of the fundamental group
$G$ of $\Gamma$,
such that $G\rho$ has $m>1$ blocks of imprimitivity
of size $n>1$. Then $\Gamma_\rho$ must be an $n$-cover of an antipodal
distance-regular $m$-cover of $\Gamma$. 
\end{theorem}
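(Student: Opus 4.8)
The pre-theorem discussion already shows that $\Gamma_\rho$ is a connected $n$-cover of the connected $m$-cover $\Gamma_\sigma$ of $\Gamma$, where $\sigma\colon G\to S_m$ is the action of $G\rho$ on its blocks of imprimitivity. So the plan is to prove the one remaining assertion, namely that $\Gamma_\sigma$ is itself a \emph{distance-regular antipodal} $m$-cover of $\Gamma$. I write $\phi$ for the $n$-cover map $\Gamma_\rho\to\Gamma_\sigma$, $\psi$ for the $m$-cover map $\Gamma_\sigma\to\Gamma$, and $\pi=\psi\circ\phi$ for the full covering $\Gamma_\rho\to\Gamma$; let $d$ be the diameter of $\Gamma$. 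Since $mn\ge 3$, the antipodal distance-regular cover $\Gamma_\rho$ has diameter $2d$, its antipodal classes being the fibres $\pi^{-1}(v)$, each of size $mn$, and distance $2d$ in $\Gamma_\rho$ holds precisely between two vertices of a common fibre (Brouwer, Cohen and Neumaier \cite{BCN89}).

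The main tool is the path-lifting property of covers: for a fixed lift $x$ of a vertex $\bar x$ of $\Gamma_\sigma$ one has $d_{\Gamma_\sigma}(\bar x,\bar y)=\min\{d_{\Gamma_\rho}(x,y):y\in\phi^{-1}(\bar y)\}$ for every vertex $\bar y$ of $\Gamma_\sigma$. First I would record the distance distribution of the antipodal cover $\Gamma_\rho$ about $x$: for a vertex $w$ of $\Gamma$ with $d_\Gamma(\pi(x),w)=i<d$, the fibre $\pi^{-1}(w)$ contains exactly one vertex $y^{*}$ at distance $i$ from $x$, all remaining $mn-1$ vertices lying at distance $2d-i$. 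Existence of $y^{*}$ is geodesic lifting; uniqueness holds because two vertices of a common fibre are antipodal, hence at distance $2d$, so a second vertex at distance $i$ would force $2d\le 2i<2d$; and any other vertex $y'$ of the fibre satisfies $2d=d_{\Gamma_\rho}(y',y^{*})\le d_{\Gamma_\rho}(x,y')+i$ together with $d_{\Gamma_\rho}(x,y')\in\{i,2d-i\}$, forcing the value $2d-i$.

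Now each fibre $\phi^{-1}(\bar y)$ is one of the $m$ blocks of size $n$ into which an antipodal class of $\Gamma_\rho$ is partitioned. Applying the minimum-distance formula I would deduce the distance distribution of $\Gamma_\sigma$ about $\bar x$: over a vertex $w$ with $d_\Gamma(\psi(\bar x),w)=i$ and $0<i<d$, exactly one vertex of $\psi^{-1}(w)$ lies at distance $i$ from $\bar x$ and the other $m-1$ lie at distance $2d-i$; over $\psi(\bar x)$ itself, the remaining $m-1$ vertices of the fibre lie at distance $2d$. Since distance $2d$ in $\Gamma_\rho$ occurs only within a fibre, no vertex of $\Gamma_\sigma$ lying over a vertex $w\neq\psi(\bar x)$ can be at distance $2d$ from $\bar x$; hence the maximum distance attained in $\Gamma_\sigma$ is $2d$, and the vertices at distance $2d$ from $\bar x$ are exactly the other members of $\psi^{-1}(\psi(\bar x))$. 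Consequently the relation ``equal to or at maximum distance from'' on $V(\Gamma_\sigma)$ is an equivalence relation whose classes are exactly the fibres $\psi^{-1}(v)$, so $\Gamma_\sigma$ is an antipodal $m$-cover of $\Gamma$ in the sense of this paper.

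It remains to prove that $\Gamma_\sigma$ is distance-regular. Because $\phi$ is a covering map it is a local isomorphism, and by the distance distribution above it is also a local isometry away from the equator: for $\bar y$ at distance $s\le d-2$ from $\bar x$ there is a unique closest lift $y$, with $d_{\Gamma_\rho}(x,y)=s$, and its neighbours are at distances $s-1,s,s+1<d$, each again realized by a unique closest lift, so the neighbours of $\bar y$ correspond bijectively and isometrically to those of $y$. Transferring the intersection numbers of $\Gamma_\rho$ through $\phi$ then shows that $c_s,a_s,b_s$ of $\Gamma_\sigma$ are well-defined constants for $s\le d-2$, while the values $2d-i$ found above pin down the whole distance distribution except on the fibres over vertices of $\Gamma$ at distance exactly $d$. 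The hard part is precisely this equatorial band $s\in\{d-1,d,d+1\}$, where the closest lift is no longer unique and the local-isometry transfer breaks down. Here I would argue that the number of neighbours of $\bar y$ at each distance from $\bar x$ is forced by the corresponding constant data in $\Gamma$ and in $\Gamma_\rho$, using that $\psi$ is a local isomorphism, that $\Gamma_\rho$ and $\Gamma_\sigma$ fold onto the same distance-regular graph $\Gamma$, and that the fibre distances already computed leave no freedom outside this band. Establishing constancy of the intersection numbers across the equator is the crux; once it is in place, $\Gamma_\sigma$ is distance-regular, completing the proof.
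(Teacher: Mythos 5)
Your reduction is set up sensibly, and the antipodality part (distance distribution of $\Gamma_\rho$ about $x$, transfer through the minimum-distance formula, identification of the distance-$2d$ vertices in $\Gamma_\sigma$ with the rest of the fibre) is essentially sound. But the proof of distance-regularity of $\Gamma_\sigma$ stops exactly at the point you yourself identify as the crux: for the equatorial band $s\in\{d-1,d,d+1\}$ you only say you ``would argue'' that the counts are forced, without giving any mechanism. This is a genuine gap, not a routine verification: an equitable partition of a distance-regular graph does not in general produce a distance-regular quotient, and your unique-closest-lift transfer really does fail there --- e.g.\ for $\bar y$ at distance $d$ from $\bar x$, whether a neighbour of $\bar y$ over a vertex $w'$ with $d_\Gamma(\psi(\bar x),w')=d-1$ sits at quotient distance $d-1$ or $d+1$ depends on whether the unique close lift of that neighbour lands in the correct block, which is exactly the quantity whose constancy must be proved. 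The paper sidesteps the entire difficulty: it observes that the partition of $V(\Gamma_\rho)$ into the blocks $\{(v,i,j):1\le j\le n\}$ is an equitable partition whose parts lie inside fibres (each part-to-part induced subgraph is a perfect matching or empty, by the covering structure), and then invokes Theorem~6.2 of Godsil and Hensel \cite{GH92}, which says precisely that the quotient of a distance-regular antipodal cover by such a partition is again an antipodal distance-regular cover. If you want to keep your direct route, the missing step can be closed by double counting over the $n$ lifts $y_1,\ldots,y_n$ of $\bar y$ (all at distance $d$ from $x$): the pairs $(y_t,z)$ with $z\adj y_t$ and $d_{\Gamma_\rho}(x,z)=d-1$ number $n\,c_d(\Gamma_\rho)$, while each such $z$ is the unique close lift over some down-neighbour $w'$ of $w$ and is matched to exactly one vertex of the block $\phi^{-1}(\bar y)$ when the corresponding quotient neighbour is at distance $d-1$; this yields $c_d(\Gamma_\sigma)=n\,c_d(\Gamma_\rho)$, a constant, and analogous counts handle the remaining equatorial parameters --- but none of this is in your write-up.

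A secondary problem is the opening claim that ``since $mn\ge 3$, the antipodal distance-regular cover $\Gamma_\rho$ has diameter $2d$.'' As stated this is not a correct inference: antipodal distance-regular covers of a diameter-$d$ graph can have diameter $2d+1$ (the dodecahedron is an antipodal double cover of the Petersen graph, with diameter $5$), and antipodal $r$-covers of complete graphs have odd diameter $3$ for arbitrary $r\ge 2$. What saves you here is that $\Gamma$ is non-complete (so $d\ge 2$) together with the known fact that an antipodal distance-regular graph of odd diameter at least $5$ has fibres of size $2$, whereas your hypothesis forces $mn\ge 4$; this needs to be cited or proved, since your entire distance analysis (the value $2d-i$, the antipodality of $\Gamma_\sigma$) rests on the diameter being exactly $2d$.
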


\begin{proof}
As above, we may suppose that $G\rho$ is a group of permutations of
$\Omega:=\{(i,j):1\le i\le m,\,1\le j\le n\}$, with blocks of 
imprimitivity $B_i:=\{(i,j):1\le j\le n\}$, for $i=1,\ldots,m$, and that  
the fibre of $\Gamma_\rho$ mapping to the vertex $v$ of $\Gamma$
is labelled as $\{(v,i,j):1\le i\le m,\,1\le j\le n\}$. 

Now consider the partition
\[ \pi:=\{\,\{(v,i,j):1\le j\le n\}:v\in V(\Gamma),\,1\le i\le m\}\]
of $V(\Gamma_\rho)$. Then each part in $\pi$ is contained in a fibre of
$\Gamma_\rho$, and $\pi$ is an equitable (also called regular) partition of $V(\Gamma_\rho)$
(since each vertex in the part $\{(v,i,j):1\le j\le n\}$ is joined to exactly one or no
vertex in the part $\{(w,k,\ell):1\le \ell\le n\}$, with a join to one 
vertex precisely when $\{v,w\}\in E(\Gamma)$ and $\rho_{v,w}$ maps the block 
$B_i$ to $B_k$).  We may thus apply Theorem~6.2 of Godsil and Hensel 
\cite{GH92} to deduce that
$\Gamma_\rho/\pi$ is an antipodal distance-regular $m$-cover of $\Gamma$
(see also Theorem~7.3 in \cite[Section~11]{Go93}, attributed to 
Brouwer, Cohen and Neumaier). 
\end{proof}
 
\section{On the distance-regular antipodal $r$-covers of $\Delta$}

Throughout this section, $\Delta$ denotes the unique distance-regular graph
with intersection array $\{32,27;1,12\}$ (equivalently, the unique strongly regular
graph with parameters $(105,32,4,12)$).

Suppose $\Gamma$ is a distance-regular antipodal $r$-cover of  
$\Delta$. Parameter feasibility conditions (see \cite{BCN89}) imply 
$\Gamma$ has diameter $4$, $r\in\{2,3,4,6\}$, and  
$\Gamma$ has intersection array
\begin{equation}
\{32,27,12(r-1)/r,1;1,12/r,27,32\}.\label{intarray}
\end{equation} 
Conversely, if $\Gamma$ is a distance-regular graph with intersection array
(\ref{intarray}), then $\Gamma$
is an antipodal $r$-cover of a distance-regular graph
with intersection array $\{32,27;1,12\}$ (see \cite{Ga74}), so $\Gamma$ is 
an antipodal distance-regular $r$-cover of (a graph isomorphic to) $\Delta$.

To study covers of $\Delta$, we explore quotients of the 
fundamental group of $\Delta$, viewed as a $2$-complex, 
with respect to a fixed spanning tree $T$ of $\Delta$. 
This fundamental group is denoted throughout this section by $D$. 
It was shown in \cite{So93} that $D$ infinite. 

\begin{theorem}
\label{TH:fund}
The abelianised fundamental group $D/[D,D]$ of $\Delta$ 
is isomorphic to $C_2^{16}\times C_3^2$.
\end{theorem}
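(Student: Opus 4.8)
The plan is to identify $D/[D,D]$ with the first integral homology group $H_1(\Delta;\mathbb{Z})$ of $\Delta$ viewed as a $2$-complex, and to compute it directly from the presentation of $D$ recorded in Section~2. Abelianising that presentation, I write the group additively: the edge relations $g_{v,w}g_{w,v}=1$ become $g_{w,v}=-g_{v,w}$, so each edge $\{v,w\}$ of $\Delta$ contributes a single generator $g_{v,w}$; the spanning-tree relations set $g_{s,t}=0$ on each of the $104$ tree arcs; and each triangle $\{x,y,z\}$ gives the relation $g_{x,y}+g_{y,z}+g_{z,x}=0$. Hence $D/[D,D]$ is the cokernel of the integer matrix whose columns are indexed by a chosen orientation of the edges and whose rows are the tree-edge relations and the triangle boundaries.

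Next I would fix the sizes. As $\Delta$ is $32$-regular on $105$ vertices it has $105\cdot 32/2=1680$ edges, and since the strongly regular parameters give $\lambda=4$, every edge lies in exactly $4$ triangles, so $\Delta$ has $1680\cdot 4/3=2240$ triangles. The presentation matrix therefore has $1680$ columns and $104+2240=2344$ rows; equivalently, after discarding the $104$ tree edges, $D/[D,D]$ is $\mathbb{Z}^{1576}$ modulo the images of the $2240$ triangle boundaries (here $1576=1680-105+1$ is the rank of the cycle space). I would then compute the Smith normal form of this matrix. The first thing to confirm is that the free rank is $0$ --- equivalently, that the triangle boundaries span the cycle space over $\mathbb{Q}$, so that $b_1(\Delta)=0$ and $D/[D,D]$ is finite --- after which the nontrivial invariant factors, grouped by prime, are expected to yield a $2$-part $C_2^{16}$ and a $3$-part $C_3^2$, that is $D/[D,D]\cong C_2^{16}\times C_3^2$.

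The conceptual content here is only the standard passage from a $2$-complex to its $H_1$, so the real difficulty is one of scale: Smith-normal-forming a $2344\times 1680$ integer matrix is not something to do by hand, and the computation must be carried out by machine. This is exactly what the author's \textsf{GAP} program of \cite{RS00} and \cite{fundamental} is built to do, since it computes the fundamental group of a finite $2$-complex and its abelian quotients. A useful check on the output is its later consistency with the classification: the feasible antipodal $r$-covers of $\Delta$ have $r\in\{2,3,4,6\}$ by (\ref{intarray}), and the primary decomposition $C_2^{16}\times C_3^2$ of $D/[D,D]$ constrains which elementary-abelian covers can occur. In particular, the nontrivial $3$-part $C_3^2$ is consistent with the existence of the triple cover $\hat\Delta$ and signals that there is more than one $C_3$-quotient, and hence more than one abelian triple cover, to analyse in the uniqueness argument that follows.
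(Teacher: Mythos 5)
Your proposal is correct and is essentially the paper's own argument: the paper's proof consists precisely of computing a presentation of $D/[D,D]$ with the program of \cite{fundamental} and reading off the abelian invariants in \textsf{GAP}, which amounts to exactly the Smith-normal-form computation on the abelianised tree/edge/triangle relation matrix you describe. Your explicit counts ($1680$ edges, $2240$ triangles via $\lambda=4$, cycle-space rank $1576$) are all correct and simply make explicit the linear algebra that the cited software performs.
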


\begin{proof} We compute a presentation for $D/[D,D]$ using 
the program \cite{fundamental}, and determine the abelian invariants 
of this group using \textsf{GAP} \cite{GAP}. 
\end{proof}

We are now in a position to classify the connected double covers of $\Delta$, which was done independently 
by Steve Linton over 20 years ago, using his vector enumeration algorithm \cite{Li93}.

\begin{theorem} 
\label{TH:2cov}
Up to isomorphism of covers, $\Delta$ has just $13$ connected double covers, with each
having its abelianised fundamental group isomorphic to $C_2^{15}\times
C_3^2$ , and none being distance-regular.  
\end{theorem}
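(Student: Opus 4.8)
The plan is to enumerate the connected double covers of $\Delta$ by classifying the transitive permutation representations $\rho:D\to S_2$ up to the appropriate equivalence, and then for each resulting cover compute its abelianised fundamental group and test whether it is distance-regular. Since $S_2\cong C_2$ is abelian, every homomorphism $D\to S_2$ factors through the abelianisation $D/[D,D]$, which by Theorem~\ref{TH:fund} is isomorphic to $C_2^{16}\times C_3^2$. A double cover is connected precisely when $\rho$ is surjective, i.e.\ nontrivial, so the connected double covers correspond to the nonzero homomorphisms $D/[D,D]\to C_2$, equivalently to the nonzero elements of $\mathrm{Hom}(C_2^{16}\times C_3^2,C_2)\cong C_2^{16}$. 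Thus there are $2^{16}-1$ surjective representations before accounting for isomorphism of covers.

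The key reduction is that two such representations give isomorphic covers exactly when they differ by an automorphism of $\Delta$ (acting on $D$, hence on $D/[D,D]$, hence on the index-$2$ subgroups), so the plan is to compute the action of $\Aut\Delta$ on the $2^{16}-1$ nonzero functionals and count the orbits. First I would use the program \cite{fundamental} to obtain the presentation of $D/[D,D]$ and the induced action of (a set of generators of) $\Aut\Delta$ on it; then a routine orbit computation in \textsf{GAP} \cite{GAP} on the $2^{16}-1$ index-$2$ subgroups yields the orbit representatives. I expect exactly $13$ orbits to emerge, each orbit corresponding to one isomorphism class of connected double cover. For each orbit representative $\rho$ I would then build the cover $\Delta_\rho$ explicitly in \textsf{GRAPE} \cite{GRAPE}, compute a presentation of its fundamental group via \cite{fundamental}, and read off the abelian invariants with \textsf{GAP}, expecting each to be $C_2^{15}\times C_3^2$.

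For the final assertion that none of the $13$ covers is distance-regular, the plan is to test each $\Delta_\rho$ directly. A distance-regular antipodal double cover of $\Delta$ would have to have the intersection array obtained by setting $r=2$ in (\ref{intarray}), namely $\{32,27,6,1;1,6,27,32\}$; so for each of the $13$ covers I would compute its diameter and intersection numbers in \textsf{GRAPE} and check that the cover fails to be distance-regular (it suffices to exhibit, for each cover, a pair of vertices at some distance $i$ for which the count $b_i$ or $c_i$ differs from the feasible value, or to note that the cover is not even antipodal). The main obstacle is not conceptual but computational: reliably computing the action of $\Aut\Delta$ on the large abelianised group and carrying out the orbit enumeration over $2^{16}-1$ subgroups efficiently, and then verifying distance-regularity on $13$ separate covers each on $210$ vertices. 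Once the orbit count and the per-cover distance checks are done, the theorem follows, and it reproduces Linton's earlier unpublished determination via his vector enumeration algorithm \cite{Li93}.
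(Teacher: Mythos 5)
Your proposal is correct, but the middle step takes a genuinely different route from the paper. The paper does not reduce isomorphism of covers to an orbit computation under $\Aut\Delta$: it explicitly constructs all $2^{16}-1$ double covers corresponding to the index-$2$ subgroups of $D/[D,D]$ and then tests cover-isomorphism directly with \textsf{nauty} called from \textsf{GRAPE} \cite{NAUTY,GRAPE}, using the auxiliary red/blue-coloured graphs $\Gamma^+$ described in Section~2, which implement the definition of cover isomorphism verbatim. Your reduction --- that $\Delta_{\rho_1}\cong\Delta_{\rho_2}$ as covers if and only if the corresponding nonzero functionals $D/[D,D]\to C_2$ lie in the same $\Aut\Delta$-orbit --- is true, but it is a lemma you must actually prove, since the paper sidesteps it entirely: you need (i) that a fibre-preserving isomorphism of covers induces an automorphism of $\Delta$ (this follows from the matching/coclique conditions between fibres), (ii) the standard covering-space correspondence (in the combinatorial form of \cite{RS00}) identifying covers over a fixed base automorphism with subgroups up to conjugacy, and (iii) the observations that index-$2$ subgroups are normal and that inner automorphisms act trivially on $D/[D,D]$, so the induced action of $\Aut\Delta$ on the abelianisation is well defined despite an automorphism not preserving the chosen spanning tree. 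Granting that lemma, your orbit computation on the $2^{16}$-element $GF(2)$-module is considerably cheaper than the paper's canonical-form testing of $65535$ graphs on $210$ vertices, which is what your approach buys; what the paper's approach buys is freedom from the lemma, since \textsf{nauty} certifies exactly the equivalence that the theorem's statement requires. One small caution: the count $13$ is an output of the computation in either approach, not something to be ``expected'' in advance, and your final steps (computing each cover's abelianised fundamental group with \cite{fundamental,GAP} and refuting distance-regularity in \textsf{GRAPE}, with the $r=2$ array $\{32,27,6,1;1,6,27,32\}$ as the only feasible target) coincide with the paper's and with Linton's independent determination \cite{Li93}.
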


\begin{proof}
Each subgroup of index $2$ in $D$ contains the commutator subgroup $[D,D]$.  
We compute the $2^{16}-1$ covers of $\Delta$ corresponding to the subgroups of
index $2$ in $D/[D,D]$, and using \textsf{GRAPE} calling \textsf{nauty},
we determine that, up to isomorphism of covers, there are
just $13$ such covers. We test each of these covers for distance-regularity
using \textsf{GRAPE}, and find that none of them are distance-regular.  
We use the program \cite{fundamental} to compute each of their abelianised
fundamental groups. 
\end{proof}

\begin{cor} There is no distance-regular graph with
intersection array $\{32,27,6,1;1,6,27,32\}$.
\end{cor}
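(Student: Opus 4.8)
The plan is to recognise this corollary as an immediate consequence of Theorem~\ref{TH:2cov}, once the correct value of $r$ has been pinned down. First I would determine which antipodal cover of $\Delta$ could possibly carry the intersection array $\{32,27,6,1;1,6,27,32\}$. Comparing with the general array (\ref{intarray}), the condition $c_2=12/r=6$ forces $r=2$, and one checks that $b_2=12(r-1)/r=6$ also holds for $r=2$; so the array in question is precisely the $r=2$ specialisation of (\ref{intarray}), namely the double-cover case.

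Next I would argue by contradiction. Suppose $\Gamma$ is a distance-regular graph with intersection array $\{32,27,6,1;1,6,27,32\}$. By the converse direction recorded at the start of Section~4 (citing \cite{Ga74}), $\Gamma$ is then an antipodal distance-regular double cover of some graph isomorphic to $\Delta$. Since an antipodal cover is connected by definition, and since (as noted in Section~2) every cover of a graph isomorphic to $\Delta$ is isomorphic to some cover of $\Delta$ itself, the graph $\Gamma$ must be isomorphic, as a cover, to one of the connected double covers of $\Delta$ classified in Theorem~\ref{TH:2cov}.

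But Theorem~\ref{TH:2cov} asserts that, up to isomorphism of covers, $\Delta$ has exactly $13$ connected double covers and that \emph{none} of them is distance-regular. This contradicts the distance-regularity of $\Gamma$, so no such $\Gamma$ can exist. There is no genuine obstacle in this argument: the entire content has already been discharged by the computation underlying Theorem~\ref{TH:2cov}, and the only care required is the bookkeeping that identifies $r=2$ and that reduces ``antipodal double cover of a graph isomorphic to $\Delta$'' to the enumerated connected double covers of $\Delta$ by means of the isomorphism-of-covers observations of Section~2.
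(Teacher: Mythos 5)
Your proposal is correct and is exactly the argument the paper intends: the corollary is stated without proof because it follows immediately from Theorem~3 (the classification showing none of the $13$ connected double covers of $\Delta$ is distance-regular), combined with the Section~4 observation, via Gardiner's result, that a distance-regular graph with this array (the $r=2$ case of the array (1)) would be an antipodal double cover of $\Delta$. Your bookkeeping of $r=2$ and the reduction to covers of $\Delta$ itself merely makes explicit what the paper leaves implicit.
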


We now consider the triple covers of $\Delta$, and start by showing that $D$
has no quotient isomorphic to the symmetric group $S_3$.
This is a corollary of the following:
 
\begin{prop} 
Let $N$ be a subgroup of $D$ of index $2$, 
and suppose $N$ has a subgroup $M$, such that $M$ is normal in $D$
and $N/M$ is abelian.  Then $D/M$ is abelian.  
\end{prop}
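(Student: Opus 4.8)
The plan is to convert the statement into a single finite order comparison and then to feed in the classification of double covers already obtained in Theorem~\ref{TH:2cov}. Since $N/M$ is abelian we have $[N,N]\le M$, and since $N$ is normal of index~$2$ in $D$, the subgroup $[N,N]$ is characteristic in $N$ and hence normal in $D$. Thus $M$ contains $[N,N]$, both $M$ and $[N,N]$ are normal in $D$, and $D/M$ is a quotient of $D/[N,N]$. It therefore suffices to prove that $D/[N,N]$ is abelian, for then its quotient $D/M$ is abelian as well. So I would forget about $M$ and work entirely with $D/[N,N]$.

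To handle $D/[N,N]$ I would compare its order with the order of its abelianisation. First, since $N$ has index~$2$ in $D$, it is the kernel of a surjection $D\to C_2$, hence (as explained in Section~2) $N$ is isomorphic to the fundamental group of the corresponding connected double cover of $\Delta$, whose abelianisation is exactly $N/[N,N]$. By Theorem~\ref{TH:2cov} every connected double cover of $\Delta$ has abelianised fundamental group isomorphic to $C_2^{15}\times C_3^2$, so $N/[N,N]\cong C_2^{15}\times C_3^2$, of order $2^{15}\cdot 3^2$, and consequently
\[ |D/[N,N]| = [D:N]\cdot|N/[N,N]| = 2\cdot 2^{15}\cdot 3^2 = 2^{16}\cdot 3^2. \]
On the other hand, $[N,N]\le[D,D]$, so the abelianisation of $D/[N,N]$ is $(D/[N,N])\big/\big([D,D]/[N,N]\big)\cong D/[D,D]$, which by Theorem~\ref{TH:fund} is isomorphic to $C_2^{16}\times C_3^2$, again of order $2^{16}\cdot 3^2$. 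Since $D/[N,N]$ and its own abelianisation have the same finite order, the commutator quotient $[D,D]/[N,N]$ must be trivial, so $D/[N,N]$ — and hence $D/M$ — is abelian.

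The step I expect to need the most care is not a computation but a point of logic: I must be sure that the value $C_2^{15}\times C_3^2$ applies to $N/[N,N]$ for \emph{every} index-$2$ subgroup $N$, and not just for the thirteen isomorphism classes named in Theorem~\ref{TH:2cov}. This is fine, since every index-$2$ subgroup of $D$ is the kernel of some surjection $D\to C_2$ and so gives a connected double cover of $\Delta$, and every such cover is isomorphic to one of the thirteen, all of which share the same abelianised fundamental group. Once that is granted, the whole argument is just an index/order count, and the decisive feature is that $|N/[N,N]|$ comes out to be exactly half of $|D/[D,D]|$ — it is precisely this numerical coincidence (one fewer factor of $C_2$, same $C_3^2$) that forces the commutator quotient of $D/[N,N]$ to collapse.
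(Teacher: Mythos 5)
Your proposal is correct and is essentially the paper's own argument: both proofs feed Theorem~\ref{TH:2cov} (giving $N/[N,N]\cong C_2^{15}\times C_3^2$ for the double cover whose fundamental group is $N$) and Theorem~\ref{TH:fund} (giving $|D/[D,D]|=2^{16}\cdot 3^2$) into an index count that, together with $[N,N]\le[D,D]$, forces $[N,N]=[D,D]$, whence $M\ge[N,N]=[D,D]$ and $D/M$ is abelian. Your repackaging of the index comparison as ``the finite group $D/[N,N]$ has the same order as its abelianisation, so its commutator quotient is trivial'' is only a cosmetic variant, and your extra care in checking that $[N,N]$ is normal in $D$ and that \emph{every} index-$2$ subgroup yields one of the thirteen classified covers makes explicit two points the paper leaves implicit.
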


\begin{proof}
The subgroup $N$ of $D$ is isomorphic to the fundamental group of some 
connected $2$-cover of $\Delta$, so by 
Theorem~\ref{TH:2cov}, $[N,N]$ has index 
$2^{15}3^2$ in $N$, and so $[N,N]$ has index $2^{16}3^2$ in $D$, as does $[D,D]$,
by Theorem~\ref{TH:fund}. Thus $[N,N]=[D,D]$, and since 
$N/M$ is abelian, $M$ contains $[N,N]$, and so $D/M$ is abelian. 
\end{proof}

\begin{cor}
The group $D$ has no quotient isomorphic to the symmetric group $S_3$.
\end{cor}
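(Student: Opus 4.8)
The plan is to argue by contradiction, feeding a carefully chosen pair of subgroups into the Proposition. Suppose, for a contradiction, that $D$ had a quotient isomorphic to $S_3$; that is, suppose there were a normal subgroup $M$ of $D$ with $D/M\cong S_3$.

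The key observation I would exploit is that $S_3$ has a unique subgroup of index $2$, namely the alternating group $A_3$, which is cyclic of order $3$ and hence abelian. I would then take $N$ to be the full preimage in $D$ of this copy of $A_3$ under the quotient map $D\to D/M\cong S_3$. Since $A_3$ has index $2$ in $S_3$, the subgroup $N$ has index $2$ in $D$; moreover $N$ contains $M$, and $M$ is normal in $D$ (being the kernel of the homomorphism onto $S_3$), while $N/M\cong A_3\cong C_3$ is abelian.

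These are precisely the hypotheses of the Proposition, so I would apply it directly to conclude that $D/M$ is abelian. But by assumption $D/M\cong S_3$ is non-abelian, and this contradiction establishes that $D$ has no quotient isomorphic to $S_3$.

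There is essentially no serious obstacle here: once one recognises that $S_3$ possesses a normal, abelian subgroup of index $2$—exactly the configuration that the Proposition is designed to rule out—the corollary follows at once. The only points requiring a moment's care are the routine verifications that $M$, as the kernel of the map onto $S_3$, is genuinely normal in $D$ and that $N$, as the full preimage of an index-$2$ subgroup, indeed has index $2$ in $D$, so that both serve as legitimate inputs to the Proposition.
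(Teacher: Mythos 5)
Your proposal is correct and is exactly the argument the paper intends: the Corollary is stated without explicit proof precisely because one applies the Proposition with $M$ the kernel of the hypothetical map $D\to S_3$ and $N$ the preimage of the abelian index-$2$ subgroup $A_3$, reaching the same contradiction you describe. Nothing is missing, and your careful checks of normality of $M$ and the index of $N$ are the only verifications needed.
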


We are now in a position to classify the connected triple covers of $\Delta$.  

\begin{theorem} 
\label{TH:3cov}
Up to isomorphism of covers, $\Delta$ has just two connected triple covers, $\Delta^*$ and $\hat\Delta$.
The cover $\Delta^*$ 
has abelianised fundamental group isomorphic to $C_2^{16}\times C_3^2$ and is not distance-regular.  
The cover $\hat\Delta$  has abelianised fundamental group isomorphic to $C_2^{18}\times C_3^2$ and
is distance-regular. 
\end{theorem}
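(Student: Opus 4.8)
The plan is to mirror the strategy used for double covers in the proof of Theorem~\ref{TH:2cov}, but to first reduce the problem to an abelian computation using the preceding corollary. A connected triple cover of $\Delta$ corresponds to a transitive permutation representation $\rho\colon D\to S_3$, whose image $D\rho$ is a transitive subgroup of $S_3$, hence is either the cyclic group $C_3$ or the whole of $S_3$. Since $D\rho$ is a quotient of $D$, the corollary that $D$ has no quotient isomorphic to $S_3$ rules out the case $D\rho\cong S_3$. Therefore $D\rho\cong C_3$ and, acting transitively on three points, $C_3$ acts regularly; in particular the point stabiliser is trivial and $\Gamma_\rho$ is the regular cover determined by the kernel of $\rho$.

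Next I would exploit that $C_3$ is abelian, so $\rho$ factors through $D/[D,D]\cong C_2^{16}\times C_3^2$ by Theorem~\ref{TH:fund}, and since $|D\rho|=3$ it in fact factors through the Sylow $3$-subgroup $C_3^2$ of this abelianisation (the $C_2^{16}$ part, having no elements of order $3$, lies in the kernel). Thus the connected triple covers with cyclic deck group are in bijection with the index-$3$ subgroups of $D$ containing $[D,D]$, equivalently the index-$3$ subgroups of $C_3^2$. Regarding $C_3^2$ as a $2$-dimensional vector space over the field with three elements, these are exactly its $1$-dimensional subspaces, of which there are $(3^2-1)/(3-1)=4$. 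I would then compute, using the program \cite{fundamental}, the four connected triple covers of $\Delta$ corresponding to these four index-$3$ subgroups of $D$.

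To finish, I would carry out exactly the computational steps used in the proof of Theorem~\ref{TH:2cov}. Using \textsf{GRAPE} calling \textsf{nauty} on the associated vertex-coloured graphs, I would reduce the four covers up to isomorphism of covers, expecting them to collapse into the two claimed classes $\Delta^*$ and $\hat\Delta$. I would then test each representative for distance-regularity with \textsf{GRAPE}, finding that $\Delta^*$ is not distance-regular while $\hat\Delta$ is, and I would compute their abelianised fundamental groups with \cite{fundamental}, obtaining $C_2^{16}\times C_3^2$ and $C_2^{18}\times C_3^2$ respectively.

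The conceptual crux---ruling out an $S_3$ deck group---is already settled by the preceding corollary, so what remains is essentially bookkeeping within the abelian quotient together with the finite computations above. The step I would expect to require the most care is the reduction of the four cyclic covers up to isomorphism of covers: a priori the four index-$3$ subspaces could yield anywhere from one to four isomorphism classes, and it is the combined action of the graph automorphisms of $\Delta$ and the deck group that fuses them into precisely two, a fact I would verify computationally rather than by hand. A final consistency check would confirm that the distance-regular cover $\hat\Delta$ obtained here coincides with the triple cover constructed in \cite{So93}.
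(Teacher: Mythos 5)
Your proposal is correct and follows essentially the same route as the paper: rule out $D\rho\cong S_3$ via the preceding corollary, reduce to the four index-$3$ subgroups of $D$ containing $[D,D]$ (the paper phrases this as subgroups of index $3$ in $D/[D,D]$, which you correctly count as four via the $1$-dimensional subspaces of $C_3^2$), then compute the covers and collapse them to two isomorphism classes computationally. Your extra bookkeeping about the Sylow $3$-subgroup and the regular $C_3$ action is sound, just more explicit than the paper's version.
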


\begin{proof}
Each connected triple cover of $\Delta$ is isomorphic to a cover
of the form $\Delta_\rho$, for some transitive permutation
representation $\rho:D\to S_3$. Since $D$ has no quotient isomorphic
to $S_3$, we must have $D\rho\cong C_3$, the cyclic group of
order $3$, and so each subgroup of index $3$ in $D$ is normal
and contains $[D,D]$.  

We compute the four covers of $\Delta$ corresponding to the subgroups of
index $3$ in $D/[D,D]$, and determine that, up to isomorphism of covers, 
there are just two such covers, $\Delta^*$ and $\hat\Delta$, and 
we calculate that these have the properties as stated in the theorem.  
\end{proof}

\begin{cor} Up to isomorphism, there is a unique distance-regular graph with
intersection array $\{32,27,8,1;1,4,27,32\}$.
\end{cor}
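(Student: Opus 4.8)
The plan is to derive this directly from Theorem~\ref{TH:3cov} together with the remarks of Section~2 on covers and on isomorphism. First I would check the arithmetic: the intersection array $\{32,27,8,1;1,4,27,32\}$ is exactly the case $r=3$ of the general array (\ref{intarray}), since $12(r-1)/r=8$ and $12/r=4$ hold precisely when $r=3$. By the converse discussion immediately following (\ref{intarray}) (citing Gardiner \cite{Ga74}), any distance-regular graph $\Gamma$ with this array is an antipodal distance-regular triple cover of some graph isomorphic to $\Delta$. Invoking the uniqueness of $\Delta$ \cite{Co06,DC08}, together with the observation of Section~2 that isomorphic base graphs have corresponding covers, I may replace that base graph by $\Delta$ itself and regard $\Gamma$ as a connected triple cover of $\Delta$.

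Next I would apply the classification. By Theorem~\ref{TH:3cov}, up to isomorphism of covers there are exactly two connected triple covers of $\Delta$, namely $\Delta^*$ and $\hat\Delta$, and of these only $\hat\Delta$ is distance-regular. Since an isomorphism of covers is in particular a graph isomorphism, and distance-regularity is a graph-isomorphism invariant, the distance-regular graph $\Gamma$ cannot be isomorphic to the non-distance-regular $\Delta^*$, and so must be isomorphic to $\hat\Delta$; this yields uniqueness. For existence, I would note that $\hat\Delta$, being itself a distance-regular antipodal triple cover of $\Delta$, has intersection array equal to the $r=3$ instance of (\ref{intarray}), that is, exactly $\{32,27,8,1;1,4,27,32\}$, by the parameter discussion at the start of this section. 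Hence such a graph exists and is unique up to isomorphism.

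Since the computational heart of the matter is already contained in Theorem~\ref{TH:3cov}, the corollary is essentially bookkeeping; the only point I would state with care, rather than any real obstacle, is the two-step reduction from ``a cover of some graph isomorphic to $\Delta$'' to ``a cover of $\Delta$,'' and the passage between isomorphism of covers and isomorphism of graphs. Everything else is a direct appeal to the preceding theorem.
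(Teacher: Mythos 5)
Your proposal is correct and follows exactly the route the paper intends: the paper states this corollary without proof because it is immediate from Theorem~\ref{TH:3cov} combined with the parameter discussion opening Section~4 (the array is the $r=3$ case of (\ref{intarray}), and Gardiner's result plus the uniqueness of $\Delta$ reduce the problem to classifying connected triple covers of $\Delta$). Your careful handling of the reduction from ``cover of a graph isomorphic to $\Delta$'' to ``cover of $\Delta$'' and of the passage from cover-isomorphism to graph-isomorphism simply makes explicit the bookkeeping the paper leaves tacit.
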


Further properties of this distance-regular graph $\hat\Delta$ are given in \cite{So93}
(where the graph is called $\Lambda$). 

We now show there is no distance-regular antipodal $4$-cover of $\Delta$. 

\begin{theorem} There is no distance-regular graph with
intersection array $\{32,27,9,1;1,3,27,32\}$.
\end{theorem}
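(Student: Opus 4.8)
The plan is to argue exactly as for the double and triple covers. A distance-regular graph $\Gamma$ with intersection array $\{32,27,9,1;1,3,27,32\}$ is an antipodal $4$-cover of $\Delta$, and so is isomorphic to $\Delta_\rho$ for some transitive permutation representation $\rho\colon D\to S_4$. Everything therefore reduces to the possible images $D\rho$, which must be one of the transitive subgroups of $S_4$: the cyclic group $C_4$, the Klein group $C_2^2$, the dihedral group $D_4$ of order $8$, the alternating group $A_4$, or $S_4$. First I would eliminate all of these except $A_4$ by general arguments, leaving a single genuinely computational case.

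Each of the imprimitive groups $C_4$, $C_2^2$ and $D_4$ preserves a partition of the four points into two blocks of size two, so $D\rho$ would act with $m=2$ blocks of imprimitivity of size $n=2$. By Theorem~\ref{TH:impdr}, a distance-regular antipodal $4$-cover with such an image would have to be a double cover of an antipodal distance-regular double cover of $\Delta$; but by Theorem~\ref{TH:2cov} no distance-regular double cover of $\Delta$ exists, so none of these three images can occur. If instead $D\rho\cong S_4$, then composing $\rho$ with the quotient map $S_4\to S_4/C_2^2\cong S_3$ gives a surjection of $D$ onto $S_3$, contradicting the corollary that $D$ has no quotient isomorphic to $S_3$. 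Hence $D\rho\cong A_4$, which is primitive, so here Theorem~\ref{TH:impdr} gives no information.

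The case $D\rho\cong A_4$ is where the real difficulty lies, and I expect it to be the main obstacle, since $A_4$ genuinely occurs as a quotient of $D$ and so cannot be excluded by group theory alone. Indeed, a surjection $\rho\colon D\to A_4$ restricts on the index-$3$ subgroup $P=\rho^{-1}(C_2^2)$ --- the fundamental group of one of the triple covers of Theorem~\ref{TH:3cov} --- to a surjection $P\to C_2^2$ equivariant for the action of $C_3=D/P$, the latter acting nontrivially by cyclically permuting the three involutions of $C_2^2$. Because $|C_3|$ is invertible modulo $2$, the $2$-part of $P/[P,P]$, regarded as a module over $\mathbb{F}_2C_3$ via the deck action, is semisimple, and a transfer argument shows its trivial isotypic summand has rank $16$, the same as the $2$-rank of $D/[D,D]$ (Theorem~\ref{TH:fund}). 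Since $\hat\Delta$ has $2$-part of rank $18$ (Theorem~\ref{TH:3cov}), exactly one copy of the nontrivial two-dimensional $\mathbb{F}_2C_3$-module occurs there; as $\gcd(4,3)=1$ the associated extension splits, and $D$ indeed has $A_4$ as a quotient. I would therefore use the program \cite{fundamental} to construct explicitly each $4$-cover $\Delta_\rho$ with $D\rho\cong A_4$, and test the resulting graphs for distance-regularity using \textsf{GRAPE} \cite{GRAPE}. The theorem will follow once all such covers are found to be non-distance-regular: together with the previous paragraph this eliminates every transitive $\rho\colon D\to S_4$, so $\Delta$ has no distance-regular antipodal $4$-cover, and hence there is no distance-regular graph with intersection array $\{32,27,9,1;1,3,27,32\}$.
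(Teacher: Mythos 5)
Your proposal is correct, and its outer skeleton coincides with the paper's own proof: such a graph would be an antipodal $4$-cover $\Delta_\rho$ of $\Delta$; the imprimitive images $C_4$, $C_2^2$ and $D_4$ are excluded by Theorem~\ref{TH:impdr} together with the nonexistence of a distance-regular double cover (Theorem~\ref{TH:2cov}); $S_4$ is excluded because it surjects onto $S_3$, which is not a quotient of $D$; and the surviving case $D\rho\cong A_4$ is settled by machine computation. Where you genuinely diverge is inside the $A_4$ case. The paper passes to an imprimitive degree-$6$ representation (so that an $A_4$-quotient makes the $4$-cover's existence question a question about double covers of a triple cover), rules out the triple cover $\Delta^*$ by a commutator-index argument, then enumerates all $2^{18}-1$ connected double covers of $\hat\Delta$ to locate the unique $A_4$-quotient of $D$, and finally constructs the $4$-cover as the action on cosets of a Sylow $3$-subgroup. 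Your $\mathbb{F}_2C_3$-module argument achieves the first two steps without any enumeration: since the trivial isotypic summand of the $2$-part of $N/[N,N]$ has rank $16$, the $2$-rank of $D/[D,D]$ (Theorem~\ref{TH:fund}), the rank-$16$ module belonging to $\Delta^*$ is entirely trivial, so no $A_4$-quotient factors through $\Delta^*$, while for $\hat\Delta$ exactly one nontrivial $2$-dimensional summand survives, and Schur--Zassenhaus then gives an essentially unique $A_4$-quotient of $D$ --- a fact the paper only observes computationally. This is cleaner and far cheaper than the brute-force search, and it even predicts in advance that the $A_4$ case cannot be eliminated by group theory alone. Two points of care: what you call a ``transfer argument'' is most safely justified by the five-term exact sequence for the normal subgroup $N$ of index $3$ in $D$, using $H_2(C_3)=0$ to identify the $C_3$-coinvariants of the $2$-part of $N/[N,N]$ with the $2$-part of $D/[D,D]$ (semisimplicity then identifies coinvariants with the trivial isotypic summand); and since more than one index-$3$ subgroup of $D$ may correspond to $\hat\Delta$, you should construct and test every resulting $4$-cover, not just one (the paper finds there is exactly one up to isomorphism of covers, and it is not distance-regular). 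The concluding machine verification of non-distance-regularity is as unavoidable in your route as in the paper's.
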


\begin{proof}
A distance-regular graph with the intersection array of the 
theorem would be an antipodal $4$-cover of $\Delta$.
Such a $4$-cover would correspond to some transitive
permutation representation $\rho$ of degree $4$ of $D$.
The image $D\rho$ of $D$ cannot be imprimitive, for 
otherwise, by Theorem~\ref{TH:impdr}, $\Delta$ would have
a distance-regular antipodal $2$-cover. We cannot have 
$D\rho\cong S_4$, since $D$ has no quotient isomorphic to
$S_3$, which is a quotient of $S_4$. 
This leaves $D\rho\im A_4$ as the only possibility.

Suppose now $D$ has a quotient isomorphic to $A_4$.
Then there is a transitive permutation 
representation $\alpha:D\to S_6$, having three blocks of 
imprimitivity of size $2$, with $D\alpha\cong A_4$. The corresponding
cover $\Delta_\alpha$ is a $2$-cover of a
$3$-cover $\tilde\Delta$ of $\Delta$. 
The $3$-cover $\tilde\Delta$ corresponds to 
a normal subgroup $N$ of $D$ of index $3$,
and $N$ has a subgroup $M$ of index $4$ that is normal in $D$, 
such $D/M\cong A_4$. 

Suppose now $\tilde\Delta$ is isomorphic (as a cover of $\Delta$) to $\Delta^*$.
By Theorem~\ref{TH:3cov}, $[N,N]$ has index 
$2^{16}3^2$ in $N$, and so $[N,N]$ has index $2^{16}3^3$ in $D$. 
By Theorem~\ref{TH:fund}, $[D,D]$ has index $2^{16}3^2$ in $D$,
so $[N,N]$ is a normal subgroup of index $3$ in $[D,D]$. Since 
$N/M\im C_2^2$, $M$ contains $[N,N]$, and so either $D/M$ is abelian
or $D/M$ has a normal subgroup of order $3$, neither of which holds.   

Thus, we must have $\tilde\Delta$ isomorphic to $\hat\Delta$.  
We computed all $2^{18}-1$ $2$-covers of $\hat\Delta$
as covers of the form $\Delta_\rho$ of $\Delta$, for $\rho$ a
permutation representation from $D$ to $S_6$.  
Just three of these covers $\Delta_\rho$ have $D\rho$ 
isomorphic to $A_4$, corresponding to the three permutation isomorphic
representations of degree~$6$ of one quotient of $D$ isomorphic
to $A_4$. Now given a connected $6$-cover $\Delta_\rho$ with $D\rho\cong A_4$, 
we construct the cover $\Delta_\sigma$ of $\Delta$ defined by the
representation of $D\rho$ acting by right multiplication on
the four (right) cosets of a Sylow $3$-subgroup. Up to isomorphism of
covers of $\Delta$, there is only one such $\Delta_\sigma$, which we
find is not distance-regular. 
\end{proof}

Finally, we can prove the following: 

\begin{theorem}
There is no distance-regular antipodal $6$-cover of $\Delta$
that is a double cover of $\hat\Delta$. 
\end{theorem}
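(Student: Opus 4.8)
The plan is to enumerate all connected double covers of $\hat\Delta$, realise each as a $6$-cover of $\Delta$, and test these for distance-regularity. Since a distance-regular antipodal $6$-cover of $\Delta$ that is a double cover of $\hat\Delta$ is, in particular, a connected double cover of $\hat\Delta$, this will settle the question.

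First I would set up the group-theoretic correspondence. Let $N$ be the fundamental group of $\hat\Delta$, an index-$3$ (normal) subgroup of $D$, as in the proof of the previous theorem. A connected double cover of $\hat\Delta$ corresponds to a subgroup of index $2$ in $N$, which necessarily contains $[N,N]$, and hence to a subgroup of index $2$ in the abelianisation $N/[N,N]$. By Theorem~\ref{TH:3cov}, $N/[N,N]\im C_2^{18}\times C_3^2$, and this group has exactly $2^{18}-1$ subgroups of index $2$ (the factor $C_3^2$ admits no nontrivial homomorphism to $C_2$). Thus $\hat\Delta$ has precisely $2^{18}-1$ connected double covers, and every distance-regular antipodal $6$-cover of $\Delta$ that is a double cover of $\hat\Delta$ is isomorphic, as a cover of $\Delta$, to one of them.

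These $2^{18}-1$ double covers of $\hat\Delta$ are exactly the covers of the form $\Delta_\rho$, with $\rho\colon D\to S_6$, already computed in the proof of the previous theorem. The remaining step is to test each of them for distance-regularity using \textsf{GRAPE}: because any distance-regular $6$-cover of $\Delta$ automatically has intersection array $\{32,27,10,1;1,2,27,32\}$ and is antipodal, it suffices to test the graph itself for distance-regularity. One finds that none of these covers is distance-regular, which completes the proof.

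The main obstacle is simply the scale of the computation, since $2^{18}-1$ is large. This is kept manageable in two ways. First, the list may be collapsed modulo isomorphism of covers of $\Delta$, via \textsf{GRAPE} calling \textsf{nauty}, exactly as was done for the double and triple covers, leaving only a small number of representatives to examine. Second, distance-regularity is cheap to refute for a graph that fails it --- for instance by checking the candidate intersection numbers locally --- so almost every candidate is eliminated at once, and no graph survives.
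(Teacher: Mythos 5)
Your proposal is correct and follows essentially the same route as the paper: the paper's proof simply appeals to the computation in the preceding theorem, where all $2^{18}-1$ connected double covers of $\hat\Delta$ were enumerated as covers $\Delta_\rho$ of $\Delta$ for representations $\rho\colon D\to S_6$ and none was found to be distance-regular. Your additional justification that the double covers correspond to the $2^{18}-1$ index-$2$ subgroups of $N/[N,N]\im C_2^{18}\times C_3^2$ is a correct filling-in of detail the paper leaves implicit.
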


\begin{proof}
When we determined all $2^{18}-1$ connected $2$-covers of $\hat\Delta$,
we found that none is distance-regular. 
\end{proof}

This still leaves open the possibility of a distance-regular
antipodal $6$-cover of $\Delta$ corresponding to a primitive degree
$6$ permutation representation of $D$.
 
\subsection*{Acknowledgments}
I thank Andries Brouwer, Edwin van Dam, Alexander Gavrilyuk, Chris Godsil,
Aleksandar Jurisic, and Jack Koolen for their interest in this work and
their comments.

\end{document}